\documentclass[a4paper,12pt]{amsart}

\usepackage{fullpage}

\makeindex

\usepackage{xcolor}
\usepackage[
colorlinks=true,
citecolor=blue,
urlcolor=blue,
linkcolor=blue,
pdfborder={0 0 0},
breaklinks
]{hyperref}

\usepackage{amsfonts,graphics,amsmath,amsthm,amscd,amssymb,amsmath,latexsym,euscript, enumerate,kotex,mathtools}
\usepackage{epsfig,url}
\usepackage{flafter}
\usepackage[all,cmtip,line]{xy}
\usepackage{array}
\usepackage[english]{babel}
\usepackage{overpic}
\usepackage{subfig}
\usepackage{multirow}
\usepackage{tikz-cd}
\usepackage[T1]{fontenc}
\usepackage{lmodern}
\usepackage{pifont}

\usepackage{microtype}
\usepackage{wrapfig}
\usepackage[shortlabels]{enumitem}
\setlist[enumerate, 1]{1\textsuperscript{o}}

\newtheorem{theorem}{Theorem}[section]
\newtheorem{lemma}[theorem]{Lemma}

\theoremstyle{definition} % italic or bold etc.
\newtheorem{definition}[theorem]{Definition}
\newtheorem{definition-lemma}[theorem]{Definition-Lemma}

\theoremstyle{remark}
\newtheorem{remark}[theorem]{Remark}

\numberwithin{equation}{section}

\newcommand{\R}{\mathbb{R}}

\newcommand{\Q}{\mathbb{Q}}

\newcommand{\A}{\mathbb{A}}

\def\Spec{\operatorname{Spec}}

\def\Supp{\operatorname{Supp}}

\newcommand{\ceil}[1]{\left\lceil #1 \right \rceil}
\let\oldframe\frame
\renewcommand\frame[1][allowframebreaks]{\oldframe[#1]}

\title[On the normalized local volume of a non-closed point]{On the normalized local volume of a non-closed point}

\date{\today}
\subjclass[2010]{14E05, 14E30}
\keywords{Non-closed point, normalized local volume}

\begin{document}

\author[D.~Kim]{Donghyeon Kim}
\address[Donghyeon Kim]{Department of Mathematics, Yonsei University, 50 Yonsei-ro, Seodaemun-gu, Seoul 03722, Republic of Korea}
\email{narimial0@gmail.com}

\begin{abstract}
In this note, we show that the normalized local volume of a non-closed point can be expressed in terms of the normalized local volumes of closed points.
\end{abstract}

\maketitle
\allowdisplaybreaks

\section{Introduction}
The notion of normalized local volume is introduced in \cite{Li18} to investigate a canonical metric structure of an affine cone. It has been revealed that this notion has connections with Kähler-Einstein metric in \cite{Li17}. Many conjectures are settled around the notion under the name of \emph{stable degeneration conjecture} (cf. \cite{Li18,LLX20,LX20}), and most of the conjectures are now solved (cf. \cite{Blu18,BL21,Xu20,XZ21,XZ25}).

\smallskip

Usually, the notion of normalized local volume is defined in a closed point of a variety. One may wonder if we can extend the notion to a non-closed point (cf. \cite[Conjecture 6.7]{LLX20} and \cite[Remark 2.5]{LX20}). For example, one can define the normalized local volume at a non-closed point as a natural generalization of that in a closed point, as in Definition \ref{doeum}, and one can also define it using the constructibility of normalized local volume (cf. \cite[Theorem 1.3]{Xu20}) for closed points (for example, \cite{HLQ23,Zhu24}). In particular, it was conjectured that after the right scaling, the definitions are equivalent. In this note, we confirm the intuition regarding the normalized local volume of a non-closed point and provide the full affirmative answer on \cite[Conjecture 6.7]{LLX20}.

\begin{theorem}[{cf. \cite[Conjecture 6.7]{LLX20}}] \label{maintheorem}
Let $(X,\Delta)$ be a klt pair of a variety over an uncountable field, and let $x\in X$ be a (not necessarily closed) point. Let $x'\in \overline{\{x\}}$ be a general closed point, and let $n:=\dim X$ and $d:=\dim \overline{\{x\}}$. Then,
$$ \widehat{\mathrm{vol}}(x',X,\Delta)=\frac{n^n}{(n-d)^{n-d}}\cdot \widehat{\mathrm{vol}}(x,X,\Delta).$$
\end{theorem}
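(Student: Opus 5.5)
We prove the two inequalities separately, using the definition of $\widehat{\mathrm{vol}}(x,X,\Delta)$ as the infimum of $A_{X,\Delta}(v)^{n-d}\mathrm{vol}(v)$ over valuations $v$ centered at $x$. Throughout we work in the local ring $\mathcal{O}_{X,x}$, which has dimension $n-d$. Its residue field $k(x)$ has transcendence degree $d$ over the base field. We also use the characterization of normalized volume through $\mathfrak{m}$-primary ideals,
$$\widehat{\mathrm{vol}}(x,X,\Delta)=\inf_{\mathfrak a}\operatorname{lct}(\mathfrak a)^{n-d}\,\mathrm{e}(\mathfrak a),$$
where $\mathfrak a$ runs over $\mathfrak m_x$-primary ideals, together with its analogue at closed points.

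\textbf{Step 1: the inequality $\widehat{\mathrm{vol}}(x')\le \frac{n^n}{(n-d)^{n-d}}\widehat{\mathrm{vol}}(x)$.} Let $\mathfrak a\subset\mathcal O_{X,x}$ be $\mathfrak m_x$-primary, and spread it out to an ideal $\mathfrak a$ on a neighbourhood of the generic point of $Z=\overline{\{x\}}$, cosupported on $Z$. Choose regular parameters $t_1,\dots,t_d$ on $Z$ near $x'$, lift them to $X$, and for each integer $m$ set
$$\mathfrak b_m=\mathfrak a+(t_1,\dots,t_d)^m.$$
At a general closed point $x'$, generic smoothness of $Z$ and generic flatness of $\mathcal O/\mathfrak a^k$ over $Z$ should give
$$\mathrm{e}(\mathfrak b_m)\approx \mathrm{e}(\mathfrak a)\,m^d \quad\text{for large } m,$$
and more precisely the mixed multiplicities of $\mathfrak a$ and $(t)^m$ should be computable. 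Using weights, consider $\mathfrak a^{p}+(t)^{q}$ and optimize over $p,q$. For log canonical thresholds, inversion-of-adjunction type estimates, or simply restriction to a general transversal slice, should give
$$\operatorname{lct}\bigl(\mathfrak a^{\alpha}\cdot (t)^{\beta}\bigr)$$
at $x'$ equal to the generic one. Optimizing the resulting product $\operatorname{lct}^n\mathrm{e}$ over the ratio between the two ideals, as in the computation for $\widehat{\mathrm{vol}}$ of a product $\mathbb A^d\times(\text{germ})$, produces exactly the factor $n^n/(n-d)^{n-d}$. This is the same calculus identity $\min_s (a+s)^n/(\text{stuff})$ that one sees in $\widehat{\mathrm{vol}}(\mathbb A^d\times Y)$. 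To avoid approximations, we will instead use valuations: take a quasimonomial $v$ centered at $x$ and the valuation $w_s$ at $x'$ that is monomial in $(v, \operatorname{ord}_{t_i})$ with weight $s$ on the $t_i$. Then
$$A(w_s)=A(v)+ds, \qquad \mathrm{vol}(w_s)=\frac{\mathrm{vol}(v)}{s^d},$$
the latter holding generically. Minimizing $(A(v)+ds)^n\,\mathrm{vol}(v)/s^d$ over $s$ gives the factor $n^n/(n-d)^{n-d}$ times $A(v)^{n-d}\mathrm{vol}(v)$.

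\textbf{Step 2: the reverse inequality.} This is the main obstacle. We would use the existence and constructibility results: by \cite{Xu20} the function $x'\mapsto\widehat{\mathrm{vol}}(x')$ is constructible and lower semicontinuous. Let $V$ be its general value on $Z$. For general $x'$ the minimizer is a quasimonomial valuation $v'$ with finitely generated associated graded ring, by \cite{XZ21}. Uniqueness of the minimizer, together with the fact that the family is defined over the generic point (plus uncountability, to pass from a very general point to the generic one), lets us descend $v'$ to a valuation over the generic fiber. Degenerating along $Z$ via a test configuration, the local germ at a general $x'$ should look, after degeneration, like $(\text{germ at }x)\times\mathbb A^d$ equivariantly. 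The torus action along the $\mathbb A^d$ direction then forces the minimizer to be of the form $w_s$ for some $v$ centered at $x$. With the optimal $s$, this gives $V\ge \frac{n^n}{(n-d)^{n-d}}A(v)^{n-d}\mathrm{vol}(v)$.

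We expect the descent in Step 2 to be hardest. We would first try to pass to a field extension $K=\overline{k(x)}$ or to the completion, and compare the volume at $x$ with the volume of the closed point of the base change $X_{k(x)}$. Localizing at $x$ gives an $(n-d)$-dimensional klt singularity over the field $k(x)$, and the formula should then reduce to a fibration argument over $Z$.
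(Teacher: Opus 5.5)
\textbf{Step 1.} This is essentially the paper's argument for the upper bound on $\widehat{\mathrm{vol}}(x',X,\Delta)$: the quasi-monomial valuation $\nu_s$ with weight $s$ on the $t_i$, and the choice $s=A/(n-d)$, in the proof of Lemma~\ref{sambong}. However, you only assert the decisive estimate $\mathrm{vol}(w_s)\le \mathrm{vol}(v)/s^d$.

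To prove it, you must compare $\ell(\mathcal{O}_{X,x'}/((t)+\mathfrak{a}_j(v)))$ with $\ell(\mathcal{O}_{X,x}/\mathfrak{a}_j(v))$. For that, the quotients $\mathcal{O}/\mathfrak{a}_j(v)$ have to be modules over $Z$ (or over $\mathbb{A}^d$). As written, ``generic flatness over $Z$'' has no meaning, because $X$ does not map to $Z$.

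The paper first passes to an étale neighbourhood $Y\to X$ of the generic point of $Z$ that carries a retraction $Y\to Z$. Your lifts $t_i$ would also give one: take $X\times_{\mathbb{A}^d}Z$, base-changed along the map $(t_1,\dots,t_d)|_Z$, which is étale near $x'$. The paper then shows $\mathfrak{I}_{m,s}=\sum_\ell\mathfrak{t}^\ell\mathfrak{b}_{(m-\ell)s}\subseteq\mathfrak{a}_{ms}(\nu_s)$ and computes $\ell(\mathcal{O}/\mathfrak{I}_{m,s})$ using flatness of $\mathcal{O}_Y/\mathfrak{b}_p$ over $Z$. This flatness is needed for countably many ideals simultaneously, so you only get the bound at very general $x'$. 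Lower semicontinuity \cite{BL21} is what upgrades it to general $x'$.

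\textbf{Step 2.} This is where the genuine gap lies: it contains no argument.
\begin{enumerate}
\item In your setup there is no family over $Z$, so ``descending $v'$ to the generic fiber'' is undefined.
\item Even with a retraction, a minimizer of the total space at the closed point $x'$ is not a valuation on a fiber.
\item The claim that the degeneration at $x'$ is (germ at $x$)$\times\mathbb{A}^d$, with the minimizer forced to be some $w_s$, is essentially the theorem itself. The germs at $x'$ are in general not products, for instance when the transversal slices vary in moduli.
\end{enumerate}

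The paper never analyzes minimizers at $x'$. Once $Y\to Z$ exists, the fiber $Y_{x'}$ is cut out near $\imath(x')$ by the $d$ Cartier divisors $t_i=0$ pulled back from $Z$. The argument then runs in three steps:
\begin{enumerate}
\item The adjunction-type inequality \cite[Theorem 1.7]{LZ21} gives $\widehat{\mathrm{vol}}(\imath(x'),Y,\Delta_Y)\ge\frac{n^n}{(n-d)^{n-d}}\widehat{\mathrm{vol}}(\imath(x'),Y_{x'},(\Delta_Y)_{x'})$.
\item Constructibility \cite[Theorem 1.3]{Xu20} identifies the fiber volume at general $x'$ with that of the geometric generic fiber.
\item Three facts identify the latter with $\widehat{\mathrm{vol}}(x,X,\Delta)$: the isomorphism $\mathcal{O}_{Y,\imath(x)}\cong\mathcal{O}_{Y_x,\imath(x)}$, invariance under base change to $\overline{k(x)}$ (where uniqueness and quasi-monomiality of the minimizer are used), and étale invariance.
\end{enumerate}
Your last paragraph gestures at the base-change comparison. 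What is missing is the inequality from \cite{LZ21}, which is what actually produces the lower bound.
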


Let us provide a sketch of the proof. Intuitively, we want to mimic the argument in \cite[Proof of Theorem 2.16]{Liu22}. Using the henselian pair argument (cf. \cite[Step 3 of Proof of Theorem 1.1]{Kim26q}), we can obtain an étale cover $Y\to X$ that admits a left inverse $Y\to Z$ of $Z\to Y$ (which plays the role of $\pi:X\to \A^1$ in \cite[Proof of Theorem 2.16]{Liu22}); this is sufficient to prove our main theorem using the arguments in \cite{Liu22,LZ21}. Note that for using \cite[Theorem 1.3]{Xu20} in our proof, we must treat the geometric generic fiber of $Y\to Z$, and that the quasi-monomiality and uniqueness of the minimizer of normalized local volume is indispensable (cf. \cite{Xu20,XZ21}).

\smallskip

The note consists of three sections. The second section collects the definitions and lemmas we need, and the third section proves Theorem \ref{maintheorem}.

\section*{Acknowledgement}
The author is partially supported by the Samsung Science and Technology Foundation
under Project Number SSTF-BA2302-03.

\section{Preliminaries}
Let us collect the notions we will use. Let $k$ be a (not necessarily closed) field of characteristic $0$. A \emph{variety} is a separated, irreducible, and finite type scheme over $k$. For more details, see \cite{Fuj17} and \cite{KM98}.

\smallskip

\begin{itemize}
    \item For a point $x\in X$ of a scheme $X$, $\mathfrak{m}_x$ is the maximal ideal of $\mathcal{O}_{X,x}$, and $k(x):=\mathcal{O}_{X,x}/\mathfrak{m}_x$ is the quotient field.
    \item For a morphism $f:Y\to Z$ of varieties and a point $z\in Z$, $Y_z:=Y\times_Z \Spec k(z)$ is the fiber, and $Y_{\overline{z}}:=Y\times_Z \Spec \overline{k(z)}$ is the geometric fiber.
    \item Let $X$ be a normal scheme, and let $\Delta$ be an effective $\Q$-Weil $\Q$-divisor on $X$. Then, $(X,\Delta)$ is a \emph{couple}. Moreover, if $K_X+\Delta$ is $\Q$-Cartier, then $(X,\Delta)$ is a \emph{pair}.
    \item For a pair $(X,\Delta)$, a proper birational morphism $f:X'\to X$ and a prime divisor $E$ on $X'$, we define
    $$ A_{X,\Delta}(E)=\mathrm{mult}_{E}(K_{X'}-f^*(K_X+\Delta))+1.$$
    Note that the definition only depends on $E$, not $f$. Moreover, $(X,\Delta)$ is \emph{Kawamata log terminal (klt)} if $A_{X,\Delta}(E)>0$ for every prime divisor $E$ over $X$.
    \item For a noetherian local ring $R$ with the maximal ideal $\mathfrak{m}$ and an $\mathfrak{m}$-primary ideal $\mathfrak{a}\subseteq R$, let us denote by $\ell_R(R/\mathfrak{a})$ the length of $R/\mathfrak{a}$ as an $R$-module. If $R$ is clear, then we denote $\ell_R(R/\mathfrak{a})$ by $\ell(R/\mathfrak{a})$.
\end{itemize}

\subsection{Valuation}
In this section, let $X$ be a variety over $k$. Let us say that $\nu:K(X)\to \R$ is a \emph{valuation} if
\begin{itemize}
\item[(1)] $\nu(a)=0$ for every $a\in k$,
\item[(2)] $\nu(f+g)\ge \min\{\nu(f),\nu(g)\}$, and
\item[(3)] $\nu(fg)=\nu(f)+\nu(g)$ for every $f,g\in K(X)$.
\end{itemize}
By convention, we set $\nu(0)=\infty$. Recall that the \emph{center} of $\nu$ on $X$, denoted by $c_X(\nu)$, is a point $\xi\in X$ such that $\nu\ge 0$ on $\mathcal{O}_{X,\xi}$, and $\nu>0$ on the maximal ideal. Since $X$ is separated, the center is unique. We say that a valuation $\nu$ is \emph{over} $X$ if the center of $\nu$ is in $X$. We set $\mathrm{Val}_X$ to be the set of valuations over $X$. We endow the weak topology on $\mathrm{Val}_X$.

\smallskip

For every point $x\in X$, $\mathrm{Val}_{X,x}\subseteq \mathrm{Val}_X$ is the subset of valuations centered on $x$. We endow $\mathrm{Val}_{X,x}$ with the subspace topology. A valuation $\nu\in \mathrm{Val}_X$ is \emph{divisorial} if it has rational rank $1$, i.e., $\nu(K(X))\subseteq \R$ is a rank $1$ abelian group. We denote by $\mathrm{DivVal}_X\subseteq \mathrm{Val}_X$ and $\mathrm{DivVal}_{X,x}\subseteq \mathrm{Val}_{X,x}$ the set of divisorial valuations with the subspace topology. We denote by $\mathrm{Val}^*_X,\mathrm{Val}^*_{X,x},\mathrm{DivVal}^*_X,\mathrm{DivVal}^*_{X,x}$ the set of non-trivial valuations.

\smallskip

For a valuation $\nu\in \mathrm{Val}_X$, we define its \emph{valuation ideal sheaf} $\mathfrak{a}_p(\nu)$ for a non-negative real number $p$ as
$$ \mathfrak{a}_p(\nu):=\{f\in \mathcal{O}_X\mid \nu(f)\ge p\}.$$
We also define the \emph{valuation ideal sequence} of $\nu$ as $\mathfrak{a}_{\bullet}(\nu):=\{\mathfrak{a}_m(\nu)\}_{m\ge 0}$.

\smallskip

Let $(X,\Delta)$ be a klt pair, and let $\nu\in \mathrm{Val}_X$ be a valuation. We may assign the \emph{log discrepancy} $A_{X,\Delta}(\nu)$ to $\nu$. For more details, see \cite[Definition 1.34]{Xu25}.

\smallskip

Let $\mathfrak{a}_{\bullet}:=\{\mathfrak{a}_m\}_{m\ge 1}$ be a sequence of ideals in $\mathcal{O}_X$. We say that $\mathfrak{a}_{\bullet}$ is a \emph{graded sequence of ideals} in $\mathcal{O}_X$ if $\mathfrak{a}_m\mathfrak{a}_n\subseteq \mathfrak{a}_{m+n}$ for every positive integers $m,n$. For a klt pair $(X,\Delta)$, we denote by
$$ \mathrm{lct}(X,\Delta,\mathfrak{a}_{\bullet})=\sup_{m\ge 1}m\cdot \mathrm{lct}(X,\Delta,\mathfrak{a}_m),$$
where $\mathrm{lct}(X,\Delta,\mathfrak{a}_m)$ is the log canonical threshold (cf. \cite[Chapter 1.3]{Xu25}).

\subsection{Normalized local volume}
In this subsection, $X$ is a variety over $k$. For a point $x\in X$, define $n:=\dim X$ and $d:=\dim \overline{\{x\}}$. For an $\mathfrak{m}_x$-primary ideal $\mathfrak{a}$ in $\mathcal{O}_X$, we define the \emph{Samuel multiplicity} $\mathrm{mult}(\mathfrak{a})$ to be 
$$\mathrm{mult}(\mathfrak{a}):=\lim_{\ell\to \infty}\frac{\ell_{\mathcal{O}_{X,x}}(\mathcal{O}_{X,x}/\mathfrak{a}^{\ell})}{\frac{\ell^{n-d}}{(n-d)!}}.$$ For a graded sequence of ideals $\mathfrak{a}_{\bullet}$, the \emph{Samuel multiplicity} of $\mathfrak{a}_{\bullet}$ is
$$ \mathrm{mult}(\mathfrak{a}_{\bullet}):=\lim_{k\to \infty}\frac{\ell(\mathcal{O}_{X,x}/\mathfrak{a}_k)}{\frac{k^{n-d}}{(n-d)!}}=\lim_{m\to \infty}\frac{\mathrm{mult}(\mathfrak{a}_m)}{m^{n-d}},$$
where the second equality follows from \cite[Theorem 6.5]{Cut14}. For a valuation $\nu\in \mathrm{Val}_{X,x}$, we define
$$ \mathrm{vol}(\nu):=\lim_{k\to \infty}\frac{\ell(\mathcal{O}_{X,x}/\mathfrak{a}_k(\nu))}{\frac{k^{n-d}}{(n-d)!}}=\mathrm{mult}(\mathfrak{a}_{\bullet}(\nu)).$$
The notion of $\mathrm{vol}(\nu)$ is first introduced in \cite{ELS03}. Note that every limit presented above exists.

\smallskip

Let us define the notion of \emph{normalized local volume}. We will extend the notion to a non-closed setting.

\begin{definition} \label{doeum}
Let $(X,\Delta)$ be a klt pair, and let $x\in X$ be a (not necessarily closed) point. We define
$$ \widehat{\mathrm{vol}}(x,X,\Delta):=\inf_{\nu\in \mathrm{Val}^*_{X,x}}A_{X,\Delta}(\nu)^{\dim X-\dim \overline{\{x\}}}\cdot \mathrm{vol}(\nu).$$
\end{definition}

We also denote by 
$$\widehat{\mathrm{vol}}_{X,\Delta}(\nu):=\begin{cases}A_{X,\Delta}(\nu)^{\dim X-\dim \overline{\{x\}}}\cdot \mathrm{vol}(\nu) & A_{X,\Delta}(\nu)<\infty \\ \infty & A_{X,\Delta}(\nu)=\infty.\end{cases}$$ 
We think that the following lemma is well-known to experts, and we will present the proof for a general non-closed $x\in X$.

\begin{lemma} \label{weolseong}
Let $(X,\Delta)$ be a klt pair, and let $x\in X$ be a point. Then,
$$ \widehat{\mathrm{vol}}(x,X,\Delta)=\inf_{\mathfrak{a}}\mathrm{lct}(X,\Delta,\mathfrak{a})^{\dim X-\dim \overline{\{x\}}}\cdot \mathrm{mult}(\mathfrak{a}),$$
where $\mathfrak{a}$ runs over all $\mathfrak{m}_x$-primary ideals. In particular, we have
$$\widehat{\mathrm{vol}}(x,X,\Delta)=\inf_{\nu\in \mathrm{DivVal}^*_{X,x}}A_{X,\Delta}(\nu)^{\dim X-\dim \overline{\{x\}}}\cdot \mathrm{vol}(\nu).$$
\end{lemma}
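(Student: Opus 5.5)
We prove the lemma by showing two inequalities and then deducing the divisorial version. Throughout, set $c:=n-d=\dim X-\dim \overline{\{x\}}$; this is the dimension of the local ring $R:=\mathcal{O}_{X,x}$. We follow the argument of Liu (and Blum) for closed points, working in $R$ in place of a closed-point local ring.

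\emph{Inequality $\ge$.} Let $\nu\in \mathrm{Val}^*_{X,x}$ with $A_{X,\Delta}(\nu)<\infty$; we may also assume $\mathrm{vol}(\nu)<\infty$, since otherwise there is nothing to prove.
\begin{itemize}
\item Since $\nu$ is centered at $x$, each $\mathfrak{a}_m(\nu)$ localizes to an $\mathfrak{m}_x$-primary ideal of $R$. This uses that $\nu>0$ on $\mathfrak{m}_x$ and that $\mathfrak{m}_x$ is finitely generated, so $\mathfrak{m}_x^N\subseteq \mathfrak{a}_m(\nu)$ for $N\gg 0$.
\item By definition of lct we have $\mathrm{lct}(\mathfrak{a}_m(\nu))\le A_{X,\Delta}(\nu)/\nu(\mathfrak{a}_m(\nu))\le A_{X,\Delta}(\nu)/m$.
\item Combining these,
$$\mathrm{lct}(\mathfrak{a}_m(\nu))^{c}\,\mathrm{mult}(\mathfrak{a}_m(\nu))\le A_{X,\Delta}(\nu)^{c}\,\frac{\mathrm{mult}(\mathfrak{a}_m(\nu))}{m^{c}}.$$
\item Letting $m\to\infty$ and using $\mathrm{mult}(\mathfrak{a}_\bullet)=\lim_m \mathrm{mult}(\mathfrak{a}_m)/m^{c}$ (the Cutkosky formula quoted in the preliminaries), the right side tends to $\widehat{\mathrm{vol}}_{X,\Delta}(\nu)$.
\end{itemize}
Hence the infimum over ideals is at most $\widehat{\mathrm{vol}}(x,X,\Delta)$.

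\emph{Inequality $\le$.} Fix an $\mathfrak{m}_x$-primary ideal $\mathfrak{a}$ of $R$.
\begin{itemize}
\item Pick a log resolution of $(X,\Delta,\mathfrak{a})$, after shrinking $X$ to a neighborhood of $x$ and extending $\mathfrak{a}$ to an ideal sheaf cosupported on $\overline{\{x\}}$ near $x$.
\item The lct is computed by a divisor $E$ with $\mathrm{lct}(\mathfrak{a})=A_{X,\Delta}(E)/\mathrm{ord}_E(\mathfrak{a})$. Since $\mathfrak{a}$ is $\mathfrak{m}_x$-primary, $\mathrm{ord}_E(\mathfrak{a})>0$, so the center of $E$ lies in $V(\mathfrak{a})$. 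Near $x$ this set is $\overline{\{x\}}$, and $\overline{\{x\}}$ has generic point $x$.
\item \emph{Main obstacle:} the center of $E$ might be a proper specialization of $x$ instead of $x$ itself. To avoid this, we localize first: work on $\Spec R$, which is excellent and of characteristic $0$, so log resolutions exist. There every point of $V(\mathfrak{a})$ equals the closed point $x$, so the computing divisor $E$ is centered at $x$. Log discrepancies and lct are unchanged under localization, since $\mathrm{ord}_E$ defines a valuation of $K(X)$ centered at $x$.
\item Set $\nu=\mathrm{ord}_E/\mathrm{ord}_E(\mathfrak{a})$. Then $\nu(\mathfrak{a})=1$, so $\mathfrak{a}^m\subseteq \mathfrak{a}_m(\nu)$, which gives $\mathrm{vol}(\nu)\le \mathrm{mult}(\mathfrak{a})$. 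Also $A_{X,\Delta}(\nu)=\mathrm{lct}(\mathfrak{a})$.
\item Using the homogeneity $\widehat{\mathrm{vol}}(\lambda\nu)=\widehat{\mathrm{vol}}(\nu)$ (degrees $c$ and $-c$ cancel), we get
$$\widehat{\mathrm{vol}}_{X,\Delta}(\mathrm{ord}_E)\le \mathrm{lct}(\mathfrak{a})^{c}\,\mathrm{mult}(\mathfrak{a}).$$
\end{itemize}
This proves the reverse inequality.

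\emph{The divisorial version.} The valuation produced in the second step is divisorial. So the chain
$$\inf_{\mathrm{DivVal}^*_{X,x}}\widehat{\mathrm{vol}}_{X,\Delta} \;\le\; \inf_{\mathfrak{a}}\mathrm{lct}(\mathfrak{a})^{c}\,\mathrm{mult}(\mathfrak{a}) \;\le\; \inf_{\mathrm{Val}^*_{X,x}}\widehat{\mathrm{vol}}_{X,\Delta} \;\le\; \inf_{\mathrm{DivVal}^*_{X,x}}\widehat{\mathrm{vol}}_{X,\Delta}$$
holds: the first two inequalities are the two steps above, and the last is because $\mathrm{DivVal}^*_{X,x}\subseteq\mathrm{Val}^*_{X,x}$. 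All quantities are therefore equal, which gives the second formula.
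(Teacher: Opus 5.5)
Your proposal is correct and follows essentially the same route as the paper. For one inequality you bound $\mathrm{lct}(\mathfrak{a}_m(\nu))\le A_{X,\Delta}(\nu)/m$ and pass to the limit; for the other you take a divisor computing $\mathrm{lct}(\mathfrak{a})$ and use $\mathfrak{a}^{m}\subseteq \mathfrak{a}_{m}(\nu)$. Your localization to $\Spec \mathcal{O}_{X,x}$, which guarantees that the computing divisor is centered at $x$ itself rather than at a specialization, makes explicit a point the paper only asserts.
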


\begin{proof}
We will mimic \cite[Proof of Proposition 2.8]{LX20}.

\smallskip

For any valuation $\nu\in \mathrm{Val}^*_{X,x}$, consider the graded sequence of valuative ideals $ \mathfrak{a}_k(\nu) $. Then, $\nu(\mathfrak{a}_k(\nu))\ge k$, and we can estimate:
$$ A_{X,\Delta}(\nu)^{\dim X-\dim \overline{\{x\}}}\cdot \frac{\mathrm{mult} (\mathfrak{a}_k(\nu))}{k^{\dim X-\dim \overline{\{x\}}}}\ge \mathrm{lct}(X,\Delta,\mathfrak{a}_k)^{\dim X-\dim \overline{\{x\}}}\cdot \mathrm{mult}(\mathfrak{a}_k(\nu)).$$
Moreover, by definition, $\mathrm{vol}(\nu)=\lim\limits_{k\to\infty} \frac{\mathrm{mult}(\mathfrak{a}_k(\nu))}{k^{\dim X-\dim \overline{\{x\}} }}$. Hence, we get one direction.

\smallskip

For any $\mathfrak{m}_x$-primary ideal $\mathfrak{a}\subseteq \mathcal{O}_{X,x}$, we can choose a divisorial valuation $\nu$ computing $\mathrm{lct}(X,\Delta,\mathfrak{a})$. Then, $\nu$ is centered on $x$. Assume $\nu(\mathfrak{a})=k$. Then, we have $\mathfrak{a}^{\ell}\subseteq \mathfrak{a}_{k\ell}(\nu)$ for any positive integer $\ell$. So, we can  estimate:
$$ \mathrm{lct}(X,\Delta,\mathfrak{a})^{\dim X-\dim \overline{\{x\}}}\cdot \mathrm{mult}(\mathfrak{a})\ge A_{X,\Delta}(\nu)^{\dim X-\dim \overline{\{x\}}}\cdot \frac{\mathrm{mult}(\mathfrak{a}_{k\ell}(\nu))}{(k\ell)^{\dim X-\dim \overline{\{x\}}}}.$$
As $\ell\to \infty$, we then get another direction. The last claim in the lemma can be followed by the proof.
\end{proof}

\subsection{Izumi's inequality}
Let us prove a version of Izumi's inequality for a non-closed $x\in X$.

\begin{lemma} \label{guryongpo}
Let $(X,\Delta)$ be a klt pair, and let $x\in X$ be a (not necessarily closed) point. Then, there exists $c>0$ such that for every $\nu\in \mathrm{Val}_{X,x}$ and every $f\in \mathcal{O}_{X,x}$,
$$ \nu(f)\le C\cdot A_{X,\Delta}(\nu)\cdot \mathrm{ord}_x(f).$$
\end{lemma}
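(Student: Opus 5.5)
The plan is to reduce to the closed-point Izumi inequality by cutting down to a general closed point of $\overline{\{x\}}$, or more directly, to localize and treat $\Spec \mathcal{O}_{X,x}$ as a klt germ at a closed point over the residue field $k(x)$.

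\textbf{Step 1: pass to the local scheme.} Set $R:=\mathcal{O}_{X,x}$ and $X_x:=\Spec R$. This is an excellent normal local scheme of characteristic $0$ whose closed point is $x$. The restriction $(X_x,\Delta|_{X_x})$ is klt. Every $\nu\in \mathrm{Val}_{X,x}$ is a valuation of $K(X)=\mathrm{Frac}(R)$ centered at the closed point of $X_x$. Log discrepancies are unchanged by this localization, because log resolutions of $X$ restrict to log resolutions of $X_x$.

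\textbf{Step 2: invoke Izumi's inequality for klt singularities over excellent bases.} Izumi's inequality for klt singularities holds for a klt pair $(Y,\Delta_Y)$ with a closed point $y$: there is $C>0$ such that $\nu(f)\le C\, A_{Y,\Delta_Y}(\nu)\,\mathrm{ord}_y(f)$ for all $\nu\in\mathrm{Val}_{Y,y}$. This is \cite[Theorem 1.2]{Li18}, and \cite{Xu25} states it for excellent local rings in characteristic $0$. The proof there only uses the following ingredients:
\begin{itemize}
\item the upper bound $\nu(f)\le \nu(\mathfrak{m}_y)\,\mathrm{ord}_E(f)/\mathrm{ord}_E(\mathfrak{m}_y)$, comparing with a fixed divisor $E$ on a log resolution dominating the blowup of $\mathfrak{m}_y$, via the monomial valuation estimate;
\item the bound $\nu(\mathfrak{m}_y)\le A(\nu)/\mathrm{lct}(\mathfrak{m}_y)$;
\item classical Izumi for the divisorial valuation $\mathrm{ord}_E$, namely Rees' theorem on analytically irreducible excellent local domains.
\end{itemize}
All three make sense for the local domain $R$. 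I would apply this statement to $(X_x,\Delta|_{X_x})$, using $\mathrm{ord}_x(f)=\max\{m: f\in \mathfrak{m}_x^m\}$. This gives the constant $C$ in the statement.

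\textbf{Step 3: the fallback argument.} If one wants to avoid citing the excellent-base version, I would argue as follows.
\begin{enumerate}
\item Take a log resolution $\pi:X'\to X$ of $(X,\Delta)$ and of the ideal $\mathfrak{m}_x$ near $x$.
\item Then $\nu(\mathfrak{m}_x)\le A_{X,\Delta}(\nu)/\mathrm{lct}(X,\Delta;\mathfrak{m}_x)$. Here $\mathrm{lct}>0$ is finite because the pair is klt.
\item The remaining task is to bound $\nu(f)/\nu(\mathfrak{m}_x)$ by a constant times $\mathrm{ord}_x(f)$. Via the quasi-monomial retraction $r_{X'}$, we have $\nu(f)\le$ a ratio controlled by $\mathrm{ord}_{E_i}(f)$ for the exceptional divisors $E_i$ over $x$, and $\nu(\mathfrak{m}_x)\ge$ the minimum over $E_i$ of $\mathrm{ord}_{E_i}(\mathfrak{m}_x)$ times the weights.
\item Finitely many divisors are involved, so it suffices to have $\mathrm{ord}_{E_i}(f)\le c_i\,\mathrm{ord}_x(f)$ for each $i$. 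This is Rees' classical Izumi theorem for divisorial valuations centered at the maximal ideal of the analytically irreducible local domain $R$; analytic irreducibility holds since $R$ is normal and excellent.
\end{enumerate}

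\textbf{Main obstacle.} The delicate step is the comparison $\nu(f)\lesssim \max_i \mathrm{ord}_{E_i}(f)\cdot \nu(\mathfrak{m}_x)/\min_i \mathrm{ord}_{E_i}(\mathfrak{m}_x)$ for arbitrary, non-quasi-monomial $\nu$. I would handle it first for quasi-monomial valuations on $X'$, which is a linear-algebra estimate on a simple normal crossing chart. I would then pass to general $\nu$ using $\nu\ge r_{X'}(\nu)$ together with $A(\nu)\ge A(r_{X'}(\nu))$. Since $f$ and $\mathfrak{m}_x$ are principalized on $X'$, we have $\nu(f)=r_{X'}(\nu)(f)$ and $\nu(\mathfrak{m}_x)=r_{X'}(\nu)(\mathfrak{m}_x)$, which may require replacing the resolution by one depending on $f$. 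To keep $C$ uniform in $f$, I would rely only on the bound from the second point of the fallback argument and Rees' theorem, not on the resolution of $f$.
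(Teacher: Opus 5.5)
There is a genuine gap, and it sits in both your summary of the ingredients in Step 2 and your fallback in Step 3. Both aim at the intermediate inequality $\nu(f)\le C'\,\nu(\mathfrak{m}_x)\,\mathrm{ord}_x(f)$, uniformly in $\nu\in\mathrm{Val}_{X,x}$, and this inequality is false. Take $X=\A^2$ with coordinates $u,v$, $\Delta=0$, $x$ the origin, $f=v$, and $\nu$ the monomial valuation with $\nu(u)=1$, $\nu(v)=N$. Then $\nu(\mathfrak{m}_x)=1$ and $\mathrm{ord}_x(f)=1$, but $\nu(f)=N$. Izumi's inequality still holds here only because $A(\nu)=N+1$.

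The same example refutes your first listed ingredient $\nu(f)\le \nu(\mathfrak{m}_y)\,\mathrm{ord}_E(f)/\mathrm{ord}_E(\mathfrak{m}_y)$ for any fixed divisor $E$, since the right-hand side does not depend on $N$. So combining $\nu(\mathfrak{m}_x)\le A_{X,\Delta}(\nu)/\mathrm{lct}(X,\Delta;\mathfrak{m}_x)$ with Rees' theorem for finitely many $E_i$ cannot give the lemma. The log discrepancy has to control $\nu(f)$ directly, not only through $\nu(\mathfrak{m}_x)$.

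Concretely, in your retraction argument a quasi-monomial $\nu$ centered at $x$ can put large weight on the strict transform $D$ of a divisor through $x$. In the example above, $D$ is the strict transform of $\{v=0\}$ on the blowup, and $\mathrm{ord}_D(\mathfrak{m}_x)=0$. Moreover, if $X'$ principalizes $f$, the set of divisors involved is infinite as $f$ varies. If it does not, $\nu\ge r_{X'}(\nu)$ only bounds $\nu(f)$ from below. So Steps 1--2 amount to a bare citation. Your reason for why the cited closed-point proof transfers to $\mathcal{O}_{X,x}$ rests on the incorrect description above. The paper itself does not assume a version over excellent local rings is available; it checks that the closed-point proof carries over.

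The missing idea is to run Izumi on a smooth model, using the log discrepancy there. Take a log resolution $\mu:X'\to X$ of $(X,\Delta)$ only, independent of $f$, and write $K_{X'}+\Delta'=\mu^*(K_X+\Delta)$.
\begin{enumerate}
\item Klt gives $\varepsilon>0$ with $\Delta'\le(1-\varepsilon)\Delta'_{\mathrm{red}}$. Since $(X',\Delta'_{\mathrm{red}})$ is lc, this yields $A_{X,\Delta}(\nu)\ge \varepsilon\, A_{X',0}(\nu)$.
\item On the smooth $X'$, with $\xi=c_{X'}(\nu)$, one has $\nu(f)\le A_{X',0}(\nu)\,\mathrm{ord}_\xi(\mu^*f)$ by \cite[Proposition 5.10]{JM12}. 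This follows from $\nu(f)\le A(\nu)/\mathrm{lct}_\xi(f)$ together with $\mathrm{lct}_\xi(f)\ge 1/\mathrm{ord}_\xi(f)$ on a regular scheme.
\item A uniform bound $\mathrm{ord}_\xi(\mu^*f)\le C\,\mathrm{ord}_x(f)$ over all points $\xi\in\mu^{-1}(x)$, from \cite[Theorem 7.44]{Xu25}, finishes the argument; its proof works for non-closed $x$.
\end{enumerate}
Uniformity in $f$ is automatic because $\mu$ does not depend on $f$. The nontrivial uniformity is over the infinitely many points of the fiber $\mu^{-1}(x)$, and this is what replaces your ``finitely many $E_i$''.
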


\begin{proof}
The proof closely follows the argument in \cite[Proof of Lemma 2.14 (2)]{HLQ23}.

\smallskip

Let $\mu:X'\to (X,\Delta)$ be a log resolution of $(X,\Delta)$, and write
$$ K_{X'}+\Delta'=\mu^*(K_X+\Delta).$$
Since $(X,\Delta)$ is klt, there exists $\varepsilon>0$ such that $\Delta'\le (1-\varepsilon)\Delta'_{\mathrm{red}}$. Since $\Delta'_{\mathrm{red}}$ is snc, $(X',\Delta'_{\mathrm{red}})$ is lc. Hence,
$$ A_{X,\Delta}(\nu)\ge \varepsilon\cdot A_{X',0}(\nu).$$
Let $\xi$ be the center of $\nu$ on $X'$. Then, by Izumi's inequality (cf. \cite[Proposition 5.10]{JM12}),
$$ \nu(f)\le \varepsilon^{-1}A_{X,\Delta}(\nu)\cdot \mathrm{ord}_{\xi}(\mu^*f).$$
Applying \cite[Theorem 7.44]{Xu25} gives the proof. Note that \cite[Proof of Theorem 7.44]{Xu25} works for non-closed $x\in X$.
\end{proof}

\subsection{Henselization and Kollár component}
Let $A$ be a noetherian ring, and let $I\subseteq A$ be an ideal. We define the following:

\begin{definition}[{cf. \cite[Lemma 0A02]{Stacks}}]
The \emph{henselization} $(A^h,I^h)$ of $(A,I)$ is
$$ A^h:=\lim_{\substack{\longrightarrow \\ B\in \mathcal{C}}}B,$$
where $\mathcal{C}$ is the category of étale ring maps $A\to B$ such that $A/I\to B/IB$ is an isomorphism, and $I^h:=IA^h$.
\end{definition}

We also define the notion of \emph{Kollár component}. We assume $k$ to be algebraically closed.

\begin{definition}[{cf. \cite{Xu14}}]
Let $x\in (X,\Delta)$ be a klt singularity, and let $S$ be a prime divisor over $X$. If there exists a proper birational morphism $f:X'\to X$ such that $S=f^{-1}(x)$ is the unique exceptional divisor, $(X',S+f^{-1}_*\Delta)$ is plt and $-(K_{X'}+S+f^{-1}_*\Delta)$ is $f$-ample, we call $S$ a \emph{Kollár component} over $x\in (X,\Delta)$.
\end{definition}

\subsection{{$\Q$-Gorenstein family of singularities}}
In this subsection, we will define the notion of \emph{$\Q$-Gorenstein family of singularities}. Note that, unlike \cite[Definition 7]{BL21} and \cite[Definition 2.9]{Xu20}, we do not assume the fibers to be connected. We assume $k$ to be algebraically closed.

\begin{definition} \label{nojeokbong}
Let $(Y,\Delta_Y)$ be a pair, and let $Y\to Z$ be a flat morphism of varieties. Let $\imath:Z\to Y$ be a section. We say that $Z\subseteq (Y,\Delta_Y)\to Z$ is a \emph{$\Q$-Gorenstein family of singularities} if
\begin{itemize}
    \item[(1)] for every $z\in Z$, the connected component of $Y_{\overline{z}}$ containing $\imath(\overline{z})$ is normal, and
    \item[(2)] for every $z\in Z$, $\Supp \Delta_Y$ does not contain $Y_z$.
\end{itemize}
If further
\begin{itemize}
    \item[(3)] $Y_{\overline{z}}$ is irreducible,
\end{itemize}
then $Z\subseteq (Y,\Delta_Y)\to Z$ is a \emph{$\Q$-Gorenstein family of irreducible singularities}, and if
\begin{itemize}
    \item[(4)] $Y_{\overline{z}}$ is irreducible, and $(Y_{\overline{z}},(\Delta_Y)_{\overline{z}})$ is klt for every $z\in Z$,
\end{itemize}
we say that $Z\subseteq (Y,\Delta_Y)\to Z$ is a \emph{$\Q$-Gorenstein family of klt singularities}.
\end{definition}

Note that in many cases, we can reduce the case of $\Q$-Gorenstein family of singularities to the $\Q$-Gorenstein family of irreducible singularities in a very satisfying way.

\begin{lemma} \label{idong}
Let $(Y,\Delta_Y)\to Z$ be a $\Q$-Gorenstein family of singularities, and $\imath:Z\to Y$ be a section of $Y\to Z$. Then, there exists an open subscheme $Y'\subseteq Y$ such that $Y'_{\overline{z}}$ is irreducible and contains $\imath(\overline{z})$ for every $z\in Z$.
\end{lemma}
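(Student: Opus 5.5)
The natural candidate for $Y'$ is the complement in $Y$ of all irreducible components of the fibers that do not pass through the section. Write $f:Y\to Z$ and define
$$ W:=\{\,y\in Y \mid y \text{ lies on an irreducible component of } Y_{f(y)} \text{ (geometrically) not containing } \imath(f(y))\,\}. $$
Set $Y':=Y\setminus \overline{W}$, or more carefully $Y\setminus W$ once $W$ is known to be closed. The proof then has two parts: show that $W$ is closed, and check the fiberwise properties of $Y\setminus W$. The main tool is the structure of the connected component of the section, that is, Stein factorization or the theory of the open subscheme $Y^0$ of points lying in the ``connected component of the identity'' for flat finite type morphisms with a section (EGA IV, 15.6).

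\textbf{Key steps.}

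First, by condition (1) of Definition \ref{nojeokbong}, the connected component $C_{\overline{z}}$ of $Y_{\overline{z}}$ containing $\imath(\overline{z})$ is normal. A connected noetherian normal scheme is integral, so $C_{\overline{z}}$ is irreducible. Hence on each geometric fiber, the unique irreducible component through $\imath(\overline{z})$ equals the connected component $C_{\overline{z}}$, and it is disjoint from all other components.

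Second, I would invoke EGA IV 15.6.5. For a flat, locally finitely presented morphism with a section $\imath$, whose fibers are geometrically "reduced around the section" (here even normal along the section component), the union $Y^0:=\bigcup_z C_{\overline{z}}$, taken set-theoretically via images in $Y_z$, is open in $Y$. Alternatively, one can argue directly. Flatness of $f$ makes it universally open. The set of points of $Y$ at which the fiber is geometrically normal is open (EGA IV 12.1.6), and it contains $\imath(Z)$ by (1). Restricting to this open set $U$, the morphism $U\to Z$ is flat with geometrically normal fibers. The number of geometric connected components, equivalently irreducible components, of fibers is then locally constructible, and the component through the section varies openly, by openness of $f$ and EGA IV 15.6. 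Take $Y':=$ the open set of points lying in the geometric connected component of $U_{\overline{z}}$ containing the section.

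Third, I would verify the conclusion: $Y'_{\overline{z}}=C_{\overline{z}}\cap U_{\overline{z}}$. This is a nonempty open subset of the irreducible $C_{\overline{z}}$ containing $\imath(\overline{z})$, hence irreducible.

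\textbf{Main obstacle.} The hard part is the openness of the "component of the section" locus in the second step. Fiberwise irreducibility of $C_{\overline{z}}$ is immediate from normality. Openness in families, however, requires the flatness of $f$ together with geometric normality of the fibers near the section, so that components do not merge or split along specializations. If citing EGA IV 15.6.5 directly is not available, I would argue as follows.
\begin{itemize}
\item Take the normalization-free Stein-type factorization: étale locally around $\imath(z)$, the relative connected components of the smooth-like (normal) locus $U$ form an étale-over-$Z$ algebraic space $\pi_0(U/Z)$, by Romagny's result on connected components of fibers for flat morphisms with geometrically reduced fibers.
\item The section $\imath$ gives an open section of $\pi_0(U/Z)\to Z$.
\item $Y'$ is the preimage of this open section.
\end{itemize}
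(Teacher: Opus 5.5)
Your proposal is correct and is essentially the paper's argument. The paper also takes $Y'$ to be the union of the connected components of the fibres $Y_z$ through $\imath(z)$, gets openness from the Stacks Project form of EGA IV 15.6 (Tag 055R), and deduces irreducibility of $Y'_{\overline{z}}$ from connectedness plus the normality in condition (1). You leave one point implicit: the geometric fibre of this union is exactly $C_{\overline{z}}$, not a union of its Galois conjugates, because the component of $Y_z$ through the $k(z)$-rational point $\imath(z)$ stays connected over $\overline{k(z)}$; the paper cites Tag 055M for precisely this, and your fallback via the geometrically normal locus and Romagny's \'etale $\pi_0$ is not needed.
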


\begin{proof}
Let us define $Y':=\bigcup_{z\in Z}Y'_z$; here, $Y'_z$ is the connected component containing $\imath(z)$. Then, $Y'\subseteq Y$ is an open subscheme by \cite[Lemma 055R]{Stacks}. Moreover, by \cite[Lemma 055M]{Stacks}, $(Y')_{\overline{z}}$ is connected (thus irreducible because $Y_{\overline{z}}$ is normal) and contains $\imath(\overline{z})$.
\end{proof}

\section{Proof of the main theorem}
In this section, we assume $k$ to be algebraically closed. We believe that the following lemma is well-known among experts (cf. \cite[The next paragraph of Conjecture 6.7]{LLX20}).

\begin{lemma} \label{sambong}
Let $Z\subseteq (Y,\Delta_Y)\to Z$ be a $\Q$-Gorenstein family of klt singularities with $\imath:Z\to Y$ over an uncountable base field. For any general closed point $x'\in Z$, we have
\begin{equation} \label{dalseo}
\widehat{\mathrm{vol}}(\imath(\overline{\eta}),Y_{\overline{\eta}},(\Delta_Y)_{\overline{\eta}})=\frac{(n-d)^{n-d}}{n^n}\widehat{\mathrm{vol}}(\imath(x'),Y,\Delta_Y),
\end{equation}
where $n:=\dim X$ and $d:=\dim Z$.
\end{lemma}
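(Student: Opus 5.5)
The plan is to split \eqref{dalseo} into two statements. First, for a general closed point $x'\in Z$ the fibre volume $\widehat{\mathrm{vol}}(\imath(x'),Y_{x'},(\Delta_Y)_{x'})$ equals the volume on the geometric generic fibre. Second, for such $x'$ the volume on the total space satisfies
$$\widehat{\mathrm{vol}}(\imath(x'),Y,\Delta_Y)=\frac{n^n}{(n-d)^{n-d}}\,\widehat{\mathrm{vol}}(\imath(x'),Y_{x'},(\Delta_Y)_{x'}),$$
where $n=\dim Y$ is the ambient dimension and $d=\dim Z$. The first statement is a constancy statement. The second is a relative version of the product formula $\widehat{\mathrm{vol}}(0,X\times\A^d)=\frac{n^n}{(n-d)^{n-d}}\widehat{\mathrm{vol}}(0,X)$ in \cite[Theorem 2.16]{Liu22}. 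Throughout I shrink $Z$ freely and use Lemma \ref{idong} so that all geometric fibres are irreducible.

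\textbf{Step 1 (constancy).} I will apply the constructibility theorem \cite[Theorem 1.3]{Xu20} to the $\Q$-Gorenstein family of klt singularities $Z\subseteq(Y,\Delta_Y)\to Z$. It says that $z\mapsto\widehat{\mathrm{vol}}(\imath(\overline z),Y_{\overline z},(\Delta_Y)_{\overline z})$ is constructible and lower semicontinuous, so it is constant on a dense open subset of $Z$, including the generic point. Two points need care. The base field is uncountable, so "very general" can be upgraded to "general" using constructibility. Also, the value at the geometric generic point must agree with the value at closed points of that open set; I would check this by noting that a minimizer at $\overline\eta$ is quasi-monomial and unique \cite{Xu20,XZ21}, so it is defined over a finite extension of $k(\eta)$ and spreads out over an étale neighbourhood.

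\textbf{Step 2, the inequality ``$\le$''.} Fix a general $x'$. Let $v_{x'}$ be the minimizer on the fibre $Y_{x'}$, obtained by specializing the spread-out minimizer from Step 1. Choose regular parameters $t_1,\dots,t_d$ of $Z$ at $x'$ and pull them back to $Y$. By the henselian-pair argument mentioned in the introduction, after an étale base change we may regard $\mathcal O_{Y,\imath(x')}^h$ as a deformation of $\mathcal O_{Y_{x'}}$ over $k[[t]]$ with a retraction. This lets us form the quasi-monomial valuation $w_a$ that is $v_{x'}$ on the fibre directions and gives weight $a$ to each $t_i$. I then aim to show:
\begin{itemize}
\item $A_{Y,\Delta_Y}(w_a)=A_{Y_{x'}}(v_{x'})+da$. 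This uses the fact that $Y_{x'}$ is a complete intersection Cartier fibre of a flat family, together with adjunction for the klt family.
\item $\mathrm{vol}(w_a)=\mathrm{vol}(v_{x'})/a^d$. This comes from the associated graded ring of $w_a$, which is $\mathrm{gr}_{v}\otimes k[t]$.
\end{itemize}
Minimizing $(A+da)^n\mathrm{vol}(v)/a^d$ over $a>0$ gives $a=A/(n-d)$ and the value $\frac{n^n}{(n-d)^{n-d}}A^{n-d}\mathrm{vol}(v)$.

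\textbf{Step 2, the inequality ``$\ge$'' (main obstacle).} I will use Lemma \ref{weolseong} and bound $\mathrm{lct}(\mathfrak a)^n\mathrm{mult}(\mathfrak a)$ from below for every $\mathfrak m_{\imath(x')}$-primary ideal $\mathfrak a$. Following \cite{Liu22,LZ21}, I first degenerate $\mathfrak a$ to its initial ideal with respect to the $t$-adic filtration along the section. This preserves multiplicity, and the lct does not increase under the degeneration, by lower semicontinuity. The degenerated ideal has the form $\sum_j\mathfrak b_j t^j$ with $\mathfrak b_j$ ideals on the fibre. Its $\mathrm{lct}^n\cdot\mathrm{mult}$ is then bounded using Izumi's inequality (Lemma \ref{guryongpo}) to control the relevant valuations, inversion of adjunction, and the mixed-multiplicity inequality from the proof of \cite[Theorem 2.16]{Liu22}. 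The delicate part is justifying the degeneration when $Y$ is only étale-locally, not globally, a family with a retraction. This is where the henselization and the left inverse $Y\to Z$ are essential, since they are what make the graded pieces $\mathfrak b_j$ genuinely ideals on $Y_{x'}$. Combining the two steps with Step 1 yields \eqref{dalseo}.
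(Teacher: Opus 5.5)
Your Step 1 is the paper's argument. The gap is in your upper bound
$\widehat{\mathrm{vol}}(\imath(x'),Y,\Delta_Y)\le\frac{n^n}{(n-d)^{n-d}}\widehat{\mathrm{vol}}(\imath(x'),Y_{x'},(\Delta_Y)_{x'})$, where the key step fails as stated.

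\textbf{Why it fails.} The henselian-pair argument only produces a retraction onto the base, and in this lemma that is just the structure map $Y\to Z$, which you already have. It does not split $\mathcal{O}^h_{Y,\imath(x')}\to\mathcal{O}_{Y_{x'},\imath(x')}$, so it does not give $\widehat{\mathcal O}_{Y,\imath(x')}\cong\widehat{\mathcal O}_{Y_{x'},\imath(x')}[[t_1,\dots,t_d]]$. A $\Q$-Gorenstein family of klt singularities is in general not formally trivial along the section; cones over a non-isotrivial family of del Pezzo surfaces are an example. Consequently ``$v_{x'}$ on the fibre directions'' is not defined for a general function on $Y$. Likewise the identity $\mathrm{gr}_{w_a}\cong\mathrm{gr}_v\otimes k[t]$, and with it $\mathrm{vol}(w_a)=\mathrm{vol}(v_{x'})/a^d$, is unjustified.

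What you actually need is a valuation on $Y$ itself, spread out over $Z$, whose valuation ideals $\mathfrak b_m\subseteq\mathcal O_Y$ are flat over $Z$ and restrict to $\mathfrak a_m(v_{x'})$ on $Y_{x'}$. Spreading out the minimizer (or its log resolution) gives only $\mathfrak b_m\mathcal O_{Y_{x'}}\subseteq\mathfrak a_m(v_{x'})$, which is the wrong direction for an upper bound, and it gives no flatness. For a quasi-monomial minimizer you would additionally need finite generation of its associated graded ring, which you never invoke.

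\textbf{How the paper does it.} It takes a Kollár component $S_{\overline\eta}$ whose normalized volume is within $\varepsilon$ of the infimum and extracts it over an étale cover of $Z$.
\begin{itemize}
\item Since $-S$ is $\mu$-ample, $\bigoplus_m\mu_*\mathcal O_{Y'}(-mS)$ is finitely generated. Generic flatness then makes every $\mathfrak b_m$ flat over $Z$ with $\mathfrak b_{m,x'}=\mathfrak a_m(S_{x'})$.
\item Flatness lets it compute $\ell(\mathcal O_{Y,\imath(x')}/\mathfrak I_{m,s})$ for $\mathfrak I_{m,s}=\sum_\ell\mathfrak t^\ell\mathfrak b_{(m-\ell)s}$.
\item The quasi-monomial valuation $\nu_s$ (your $w_a$ with $a=s$) enters only through $\nu_s(\mathfrak I_{m,s})\ge ms$, to bound the lct. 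No equality of volumes is needed.
\item Countably many $\varepsilon$ are used, so this gives the bound only for very general $x'$. The passage to general $x'$ uses lower semicontinuity of $x''\mapsto\widehat{\mathrm{vol}}(\imath(x''),Y,\Delta_Y)$ \cite{BL21}. That is a different function from the fibrewise one you upgrade in Step 1.
\end{itemize}

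\textbf{Your lower bound.} This is a workable alternative to the paper, which simply cites \cite[Theorem 1.7]{LZ21}. By flatness $t_1,\dots,t_d$ is a regular sequence, so $\mathrm{gr}_{\mathfrak t}\mathcal O_{Y,\imath(x')}\cong\mathcal O_{Y_{x'},\imath(x')}[T_1,\dots,T_d]$. Hence the deformation to the normal cone degenerates $(Y,\Delta_Y)$ to $(Y_{x'}\times\A^d,(\Delta_Y)_{x'}\times\A^d)$ for general $x'$. Apply lower semicontinuity of lct to the graded sequence $\{\mathrm{in}(\mathfrak a^m)\}_m$, not to $\mathrm{in}(\mathfrak a)$, whose multiplicity can be larger. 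You can then conclude directly from the product formula \cite[Theorem 2.16]{Liu22} iterated $d$ times. No henselization or retraction is involved here, contrary to your last two sentences.
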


\begin{proof}
We will follow \cite[Proof of Proposition 2.17]{Liu22} almost verbatim.

\smallskip

By applying \cite[Theorem 1.3]{Xu20} to $(Y,\Delta_Y)\to Z$, we have that there exists an open subscheme $U\subseteq Z$ such that
\begin{equation} \label{Xu}
\widehat{\mathrm{vol}}(\imath(x'),Y_{x'},(\Delta_Y)_{x'})=\widehat{\mathrm{vol}}(\imath(\overline{\eta}),Y_{\overline{\eta}},(\Delta_Y)_{\overline{\eta}})
\end{equation}
for every closed point $x'\in U$. Now, the ``$\le$'' part can be derived from \cite[Theorem 1.7]{LZ21} and (\ref{Xu}), and we only need to prove the ``$\ge$'' part. Fix $\varepsilon>0$. Choose a Kollár component $S_{\overline{\eta}}$ over $\imath(\overline{\eta})\in (Y_{\overline{\eta}},(\Delta_Y)_{\overline{\eta}})$ such that 
\begin{equation}\label{agibong}
\widehat{\mathrm{vol}}_{Y_{\overline{\eta}},(\Delta_Y)_{\overline{\eta}}}(S_{\overline{\eta}})\le \widehat{\mathrm{vol}}(\imath(\overline{\eta}),Y_{\overline{\eta}},(\Delta_Y)_{\overline{\eta}})+\varepsilon
\end{equation}
(cf. \cite[Theorem 1.3]{LX20}). Let $\mu:Y'\to Y$ be a birational morphism extracting $S$ over $Z$ such that the geometric generic fiber of $S$ is $S_{\overline{\eta}}$. Such $\mu$ exists if we replace $Z$ by $Z'$ such that there exists a quasi-finite morphism $Z'\to Z$ (cf. \cite[Remark 2.8]{KL25}), and we may assume that $Z'\to Z$ is an étale morphism by the generic smoothness (cf. \cite[Corollary 3.10.7]{Har77}). Note that the normalized local volume is an invariant under any étale morphism (cf. \cite[2.14]{Kol13}, Lemma \ref{weolseong}, and \cite[Proposition 2.24]{HLQ23}).

\smallskip

For $m\ge 0$, define $\mathfrak{b}_p:=\mu_*\mathcal{O}_{Y'}(-\ceil{pS})$. Let us mimic the argument in \cite[Proof of Lemma 4.2]{Smi18}. Since $-S$ is ample, we see that
$$ \bigoplus_{m\ge 0}\mathfrak{b}_m=\bigoplus_{m\ge 0} \mu_*\mathcal{O}_{Y'}(-\ceil{mS}) $$
is a finitely generated $\mathcal{O}_Z$-algebra. Thus, the associated graded algebra $\bigoplus_{m\ge 0}\mathfrak{b}_m/\mathfrak{b}_{m+1}$ on $I=\bigoplus_{m\ge 0}\mathfrak{b}_{m+1}$ is also a finitely generated $\mathcal{O}_Z$-algebra. Hence, by the generic flatness (cf. \cite[Proposition 052A]{Stacks}), after shrinking $Z$, we may assume that the algebra is flat over $\mathcal{O}_Z$. Thus, each piece $\mathfrak{b}_m/\mathfrak{b}_{m+1}$ is flat over $\mathcal{O}_Z$. Considering
$$ 0\to \mathfrak{b}_m/\mathfrak{b}_{m+1}\to \mathcal{O}_{Y}/\mathfrak{b}_{m+1} \to \mathcal{O}_{Y}/\mathfrak{b}_m\to 0,$$
we may assume that for every $m$, $\mathcal{O}_Y/\mathfrak{b}_m$ is flat over $\mathcal{O}_Z$. Furthermore, by
$$ 0\to \mathfrak{b}_m\to \mathcal{O}_Y\to \mathcal{O}_Y/\mathfrak{b}_m\to 0, $$
we may assume further that $\mathfrak{b}_m$ is flat over $\mathcal{O}_Z$ for every $m$.

\smallskip

we may assume that $\mathfrak{b}_m$ is a flat family of ideals over $Z$ after shrinking $Z$. Let $S_{x'}$ and $\mathfrak{b}_{p,x'}$ be the restrictions of $S$ and $\mathfrak{b}_p$ on $Y_{x'}$. By flatness of $S$ and $\mathcal{O}_Y/\mathfrak{b}_p$ over $Z$, we have $\mathfrak{b}_{p,x'}=\mathfrak{a}_p(S_{x'})$ for any $x'\in Z$.
\smallskip

Fix a general $x'\in Z$, and let $t_1,\cdots,t_d\in \mathfrak{m}_{Z,x'}$ be regular parameters. Set 
$$\mathfrak{t}:=(t_1,\cdots,t_d)\mathcal{O}_{Y,\imath(x')}=\mathfrak{m}_{Z,x'}\mathcal{O}_{Y,\imath(x')}.$$ 
Fix $s>0$, and define
$$ \mathfrak{I}_{m,s}:=\sum^m_{\ell=0}\mathfrak{t}^{\ell}\mathfrak{b}_{(m-\ell)s}.$$
Considering
$$\mathfrak{I}_{m,s}=\mathfrak{t}^m+\mathfrak{I}_{m,s}\subsetneq \mathfrak{t}^{m-1}+\mathfrak{I}_{m,s}\subsetneq \cdots \subsetneq \mathfrak{t}+\mathfrak{I}_{m,s}\subsetneq \mathcal{O}_{Y,\imath(x')}, $$
we obtain
$$
\begin{aligned}
\ell(\mathcal{O}_{Y,\imath(x')}/\mathfrak{I}_{m,s})&=\sum^{m-1}_{\ell=0}\ell\left(\frac{\mathfrak{t}^{\ell}+\mathfrak{I}_{m,s}}{\mathfrak{t}^{\ell+1}+\mathfrak{I}_{m,s}}\right) 
\\ &=\sum^{m-1}_{\ell=0}\ell\left(\frac{\mathfrak{t}^{\ell}}{\mathfrak{t}^{\ell}\cap(\mathfrak{t}^{\ell+1}+\mathfrak{I}_{m,s})}\right)
\\ &=\sum^{m-1}_{\ell=0}\ell\left(\frac{\mathfrak{t}^{\ell}}{\mathfrak{t}^{\ell}\cap(\mathfrak{t}^{\ell+1}+\mathfrak{b}_{(m-\ell)s})}\right) & (1)
\\ &=\sum^{m-1}_{\ell=0}\ell\left(\frac{\mathfrak{t}^{\ell}(\mathcal{O}_{Y,\imath(x')}/\mathfrak{b}_{(m-\ell)s})}{\mathfrak{t}^{\ell+1}(\mathcal{O}_{Y,\imath(x')}/\mathfrak{b}_{(m-\ell)s})}\right)
\\ &=\sum^{m-1}_{\ell=0}\ell((\mathfrak{m}^{\ell}_{x'}/\mathfrak{m}^{\ell+1}_{x'})\otimes_{k(x')}(\mathcal{O}_{Y_{x'},\imath(x')}/\mathfrak{b}_{((m-\ell)s)x'})) & (2)
\\&=\sum^{m-1}_{\ell=0}\dim_{k(x')}(\mathfrak{m}^{\ell}_{x'}/\mathfrak{m}^{\ell+1}_{x'})\cdot \ell(\mathcal{O}_{Y_{x'},\imath(x')}/\mathfrak{b}_{((m-\ell)s)x'})
\\ &=\sum^{m-1}_{\ell=0}\binom{\ell+d-1}{d-1}\cdot \ell(\mathcal{O}_{Y_{x'},\imath(x')}/\mathfrak{b}_{((m-\ell)s)x'}),& (3)
\end{aligned}$$
where
\begin{itemize}
    \item (1) and (2) come from the flatness of $\mathcal{O}_{Y,\imath(x')}/\mathfrak{b}_{(m-\ell)s}$ over $\mathcal{O}_{Z,x'}$, and
    \item (3) follows from \cite[Lemma 00NO]{Stacks}.
\end{itemize}

Note that, by the definition of local volume,
\begin{equation} \label{pohanghang}
\begin{aligned}\ell(\mathcal{O}_{Y_{x'},\imath(x')}/&\mathfrak{b}_{((m-\ell)s)x'})
\\ & =\frac{1}{(n-d)!}\mathrm{vol}(\mathrm{ord}_{S_{x'}})(m-\ell)^{n-d}s^{n-d}+O((m-\ell)^{n-d-1}s^{n-d-1}).
\end{aligned}
\end{equation}
Using
$$ \binom{\ell+d-1}{d-1}=\frac{\ell^{d-1}}{(d-1)!}+O(\ell^{d-2})$$
and
\begin{equation} \label{elementary}
\sum^{m-1}_{\ell=0}\ell^{d-1}(m-\ell)^{n-d}=\frac{(d-1)!(n-d)!}{n!}m^n+O(m^{n-1})
\end{equation}
((\ref{elementary}) comes from $\int^1_0x^{d-1}(1-x)^{n-d}\,\mathrm{d}x=\frac{(d-1)!(n-d)!}{n!}$), we obtain that from (\ref{pohanghang}),
$$ \ell(\mathcal{O}_{Y,\imath(x')}/\mathfrak{I}_{m,s})=\frac{s^{n-d}}{n!}\mathrm{vol}(\mathrm{ord}_{S_{x'}})m^n+O(m^{n-1}).$$
Thus, the Samuel multiplicity is 
\begin{equation} \label{haksan}
\mathrm{mult}(\mathfrak{I}_{\bullet,s})=\mathrm{vol}(\mathrm{ord}_{S_{x'}})s^{n-d}
\end{equation}

\smallskip

Let $\nu_s$ be the valuation of $k(X)$ as the quasi-monomial combination of $Y_1,\cdots,Y_d$ and $S$ of weight $s,s,\cdots,1$, where $Y_i$ is the pullback of $(t_i=0)\subseteq Z$ near $x'$. Then, it is clear that
\begin{equation} \label{quasi-monomial combination}
A_{Y,\Delta_Y}(\nu_s)=ds+A_{Y_{x'},(\Delta_Y)_{x'}}(S_{x'}),\text{ and } \nu_s(\mathfrak{I}_{m,s})\ge ms.
\end{equation}
Hence, we have
$$ \mathrm{lct}(Y,\Delta_Y,\mathfrak{I}_{\bullet,s})\le \frac{A_{Y,\Delta_Y}(\nu_s)}{\nu_s(\mathfrak{I}_{\bullet,s})}\le d+s^{-1}A_{Y_{x'},(\Delta_Y)_{x'}}(S_{x'}).$$
Combining \cite[Theorem 27]{Liu18} and (\ref{quasi-monomial combination}) with (\ref{haksan}), we obtain
$$ 
\begin{aligned}
\widehat{\mathrm{vol}}(\imath(x'),Y,\Delta_Y)&\le \mathrm{lct}(Y,\Delta_Y,\mathfrak{I}_{\bullet,s})^n\cdot \mathrm{mult}(\mathfrak{I}_{\bullet,s})
\\&\le (d+s^{-1}A_{Y_{x'},(\Delta_U)_{x'}}(S_{x'}))^n\cdot \mathrm{vol}(\mathrm{ord}_{S_{x'}})s^{n-d}.
\end{aligned}$$
Since $s$ is arbitrary, we may choose $s=\frac{A_{Y_{x'},(\Delta_Y)_{x'}}(S_{x'})}{n-d}$, and hence
$$ 
\begin{aligned}\widehat{\mathrm{vol}}(\imath(x'),Y,\Delta_Y)&\le \frac{n^n}{(n-d)^{n-d}}A_{Y_{x'},(\Delta_Y)_{x'}}(S_{x'})^{n-d}\cdot \mathrm{vol}(\mathrm{ord}_{S_{x'}})
\\ &\le \frac{n^n}{(n-d)^{n-d}}(\widehat{\mathrm{vol}}(\imath(\overline{\eta}),Y_{\overline{\eta}},(\Delta_Y)_{\overline{\eta}})+\varepsilon).
\end{aligned}
$$
The second inequality follows from (\ref{agibong}) and
$$ \widehat{\mathrm{vol}}_{Y_{x'},\imath(x')}(\mathrm{ord}_{S_{x'}})=\widehat{\mathrm{vol}}_{Y_{\overline{\eta}},\imath(\overline{\eta})}(\mathrm{ord}_{S_{\overline{\eta}}})$$
by flatness of $S$ and $\mathfrak{b}_{\bullet}$ over $Z$. Hence, letting $\varepsilon\to 0^+$ and (\ref{Xu}) gives
\begin{equation} \label{namduiduiaegu}
\widehat{\mathrm{vol}}(\imath(x'),Y,\Delta_Y)\le \frac{n^n}{(n-d)^{n-d}}\widehat{\mathrm{vol}}(\imath(\overline{\eta}),Y_{\overline{\eta}},(\Delta_Y)_{\overline{\eta}})
\end{equation}
for every very general $x'\in Z$.

\smallskip

Suppose for the contradiction that 
\begin{equation} \label{mizmom}
\widehat{\mathrm{vol}}(\imath(x''),Y,\Delta_Y)>\frac{n^n}{(n-d)^{n-d}}\widehat{\mathrm{vol}}(\imath(\overline{\eta}),Y_{\overline{\eta}},(\Delta_Y)_{\overline{\eta}})
\end{equation}
for some $x''\in U$. Then, by applying the lower semicontinuity of normalized local volume to the projection $Y\times Z\to Z$ with the section $x'\mapsto (\imath(x'),x')$ (cf. \cite[Theorem 1]{BL21}), we obtain that there exists an open subscheme $U'\subseteq Z$ such that (\ref{mizmom}) holds for every closed point $x''\in U'$. It is a contradiction to (\ref{namduiduiaegu}). Hence, (\ref{namduiduiaegu}) holds for every $x''\in U$. Now, (\ref{Xu}) gives the proof.
\end{proof}

\begin{remark}
To ensure that $S_{\overline{\eta}}$ spreads out and that $\mathcal{O}_{Y,\imath(x')}/\mathfrak{b}_{m}$ is flat, it is necessary to shrink $Z$, which leads us to use the notion of ``very general''. For this reason, we must assume that the base field is uncountable.
\end{remark}

Let us prove the main theorem, Theorem \ref{maintheorem}.

\begin{proof}[Proof of Theorem \ref{maintheorem}]
Let $Z:=\overline{\{x\}}$. Since the theorem is local, we may assume that $X:=\Spec A$ is affine. After using generic smoothness (cf. \cite[Lemma 056V]{Stacks}), we may assume $Z$ is a smooth variety. Let $I$ be the ideal of $A$ corresponding to $Z$, and let us consider the henselization $(A^h,I^h)$ of $(A,I)$. By \cite[Lemma
0H74]{Stacks}, the identity $A/I\to A/I$ can be lifted to $A/I\to A^h$, and it is a section of $A^h\to A^h/I^h=A/I$. Since \cite[Lemma 00QO]{Stacks} holds, after shrinking $X$ and $Z$, there exists an étale neighborhood $\pi:Y\to X$ of $\eta_Z$ such that $Z\subseteq X$ lifts $\imath:Z\hookrightarrow Y$, and it has a section $Y\to Z$. Let $\Delta_Y:=\pi^*\Delta$. Note that
\begin{equation}\label{unjang}
\text{the natural map }\mathcal{O}_{Y,\imath(x)}\to \mathcal{O}_{Y_{x},\imath(x)}\text{ is an isomorphism}
\end{equation}
and
\begin{equation} \label{yanghak}
\text{The map }\mathcal{O}_{X,x}/\mathfrak{m}^m_x\to \mathcal{O}_{Y,\imath(x)}/\mathfrak{m}^m_{\imath(x)}\text{ is an isomorphism for every }m\ge 1,
\end{equation}
and the latter comes from \cite[Lemma 0AGU]{Stacks}.

\smallskip

Let us construct a $\Q$-Gorenstein family of klt singularities $Z\subseteq (Y',\Delta_{Y'})\to Z$.
\begin{itemize}
    \item After shrinking $Z$, we may assume $Y$ is flat over $Z$ by generic flatness (cf. \cite[Proposition 052A]{Stacks}).
    \item Since $Y\to X$ is étale, $Y$ is normal (cf. \cite[Lemma 034F]{Stacks}), and thus $\mathcal{O}_{Y_x,\imath(x)}\cong \mathcal{O}_{Y,\imath(x)}$ is normal (cf. (\ref{unjang})). Since an integral open subscheme $Y'$ of the connected component of $Y_x$ containing $\imath(x)$ is normal, by \cite[Lemma 0C3M]{Stacks}, the $Y'_{\overline{k(x)}}$ is also normal.
    \item By \cite[Proposition 11.3.1]{EGAIVIII}, after shrinking $Z$, we may assume that the connected component of $Y'_{\overline{z}}$ containing $\imath(\overline{z})$ is normal. After replacing $Z$ with the image of $Y'\setminus \Supp \Delta_{Y'}$ ($\Delta_{Y'}:=\Delta_Y|_{Y'}$), $\Supp \Delta_{Y'}$ does not contain $Y'_z$ for every $z\in Z$. So, $Z\subseteq (Y',\Delta_{Y'})\to Z$ is a $\Q$-Gorenstein family of singularities (cf. Definition \ref{nojeokbong}).
    \item Moreover, by Lemma \ref{idong}, there exists an open subscheme $Y''\subseteq Y'$ such that $Y''_{\overline{z}}$ is connected and contains $\imath(\overline{z})$. We replace $Y'$ with $Y''$. Then, $Z\subseteq (Y',\Delta_{Y'})\to Z$ is a $\Q$-Gorenstein family of irreducible singularities.
    \item Moreover, by \cite[Definition-Lemma 2.8]{Xu20}, after shrinking $Z$, we may assume that for every point $z\in Z$, $(Y'_{\overline{z}},(\Delta_{Y'})_{\overline{z}})$ is klt. Hence, $Z\subseteq (U_{Y'},\Delta_{Y'})\to Z$ is a $\Q$-Gorenstein family of klt singularities.
\end{itemize}

\smallskip

Note that by Lemma \ref{weolseong},
$$ \widehat{\mathrm{vol}}(x,X,\Delta)=\inf_{\nu\in \mathrm{DivVal}_{X,x}}A_{X,\Delta}(\nu)^{\dim X-\dim \overline{\{x\}}}\cdot \mathrm{vol}(\nu),$$
and applying \cite[2.14]{Kol13} ($A_{X,\Delta}(\nu)=A_{Y',\Delta_{Y'}}(\nu)$) and \cite[Proposition 2.24 (2)]{HLQ23} ($\mathrm{vol}(\nu)=\mathrm{vol}(\phi(\nu))$ for the homeomorphism $\phi:\mathrm{Val}_{X,x}\to \mathrm{Val}_{Y',\imath(x)}$ constructed in \cite[Proposition 2.24]{HLQ23}) gives
\begin{equation} \label{eungam} 
\widehat{\mathrm{vol}}(x,X,\Delta)=\widehat{\mathrm{vol}}(\imath(x),Y',\Delta_{Y'})\,\text{ and }\,\widehat{\mathrm{vol}}(x',X,\Delta)=\widehat{\mathrm{vol}}(\imath(x'),Y',\Delta_{Y'}).
\end{equation}
Note that \cite[Proposition 2.24 (2)]{HLQ23} holds even if $x\in X$ is not closed because $\widehat{\mathcal{O}_{X,x}}\cong \widehat{\mathcal{O}_{Y',\imath(x)}}$ (that follows from (\ref{yanghak})) and we can replace \cite[Lemma 2.14]{HLQ23} in \cite[Proof of Proposition 2.24 (1)]{HLQ23} with Lemma \ref{guryongpo}.

\smallskip

Let us prove
\begin{equation} \label{umuljae}
\widehat{\mathrm{vol}}(\imath(x),Y',\Delta_{Y'})=\widehat{\mathrm{vol}}(\imath(\overline{x}),Y'_{\overline{x}},(\Delta_{Y'})_{\overline{x}}).
\end{equation}
By \cite[End of Chapter 3]{BL21} (note that the quasi-monomiality of uniqueness of the computing valuation is proved in \cite{Xu20,XZ21}),
\begin{equation} \label{johang}
\widehat{\mathrm{vol}}(\imath(x),Y'_x,(\Delta_{Y'})_x)=\widehat{\mathrm{vol}}(\imath(\overline{x}),Y'_{\overline{x}},(\Delta_{Y'})_{\overline{x}}).
\end{equation}
The combination of (\ref{unjang}) and (\ref{johang}) gives (\ref{umuljae}).

\smallskip

By combining Lemma \ref{sambong} (applied to $Z\subseteq (Y',\Delta_{Y'})\to Z$) and (\ref{umuljae}), we obtain
$$ \widehat{\mathrm{vol}}(\imath(x),Y',\Delta_{Y'})=\frac{(n-d)^{n-d}}{n^n}\widehat{\mathrm{vol}}(\imath(x'),Y',\Delta_{Y'}),$$
for a general $x'\in Z$, and combining that with (\ref{eungam}), we prove the theorem.
\end{proof}

\bibliographystyle{habbvr}
\bibliography{biblio}

\end{document}